\long\def\symbolfootnote[#1]#2{\begingroup%
\def\thefootnote{\fnsymbol{footnote}}\footnote[#1]{#2}\endgroup}
\newtheorem{theorem}{Theorem}[section]
\newtheorem{lemma}{Lemma}
\newtheorem{definition}{Definition}
\newcommand{\Prob}{\mathsf{P}}
\newcommand{\Expect}{\mathsf{E}}
\definecolor{lightblue}{rgb}{.7, .8, 1}
\definecolor{lightgreen}{rgb}{.6, 1, .6}
\definecolor{brown}{rgb}{1,0.38,0.03}
\definecolor{OliveGreen}{rgb}{.2,0.6,0.2}
\definecolor{BrickRed}{rgb}{.7,0.2,0.2}
\newcommand{\ignore}[1]{} 
\long\def\symbolfootnote[#1]#2{\begingroup%
\def\thefootnote{\fnsymbol{footnote}}\footnote[#1]{#2}\endgroup}
\DeclareMathOperator*{\esssup}{ess\,sup}
\newcommand{\bsp}{\begin{split}}
\newcommand{\esp}{\end{split}}
\begin{document}

\sloppy
\ninept

\title{Quickest Detection Of Deviations From Periodic Statistical Behavior}

\name{Taposh Banerjee$^{\star}$ \; Prudhvi Gurram$^{\dagger \ddagger}$ \; and \; Gene Whipps$^{\dagger}$ 
\thanks{The work of Taposh Banerjee was supported
by a grant from the Army Research Lab, W911NF1820295.}
}
\address{$^{\star}$ Department of ECE, University of Texas at San Antonio\\
    $^{\dagger}$ U.S. Army Research Laboratory\\
    $^{\ddagger}$ Booz Allen Hamilton\\
    }
\maketitle

\vspace{-0.4cm}

\begin{abstract}
A new class of stochastic processes called independent and periodically identically distributed (i.p.i.d.) processes is defined
to capture periodically varying statistical behavior. Algorithms are proposed to detect changes in such i.p.i.d. processes. 
It is shown that the algorithms can be computed recursively and are asymptotically optimal. This problem has applications
in anomaly detection in traffic data, social network data, and neural data, where periodic statistical behavior has been observed. 
\end{abstract}

\begin{keywords}
Cyclostationary behavior, regular behavior, anomaly detection, asymptotic optimality, quickest change detection.
\end{keywords}

\vspace{-0.5cm}

\section{Introduction}
In the classical problem of quickest change detection \cite{poor-hadj-qcd-book-2009}, \cite{tart-niki-bass-2014}, \cite{veer-bane-elsevierbook-2013}, 
a decision maker observes a stochastic process with a given distribution. At some point in time, the distribution of the process changes. The 
problem objective is to detect this change in distribution as quickly as possible, with minimum possible delay, subject to a constraint 
on the rate of false alarms. This problem has applications in statistical process control \cite{taga-jqt-1998}, sensor networks \cite{bane-tsp-2015}, cyber-physical system monitoring \cite{chen-bane-tps-2016}, regime changes in neural data \cite{banerjee2018sequential}, traffic monitoring \cite{bane-fusion-2018}, and in general, anomaly detection \cite{bane-fusion-2018}, \cite{bane-globalsip-2018}. 

In many applications of anomaly detection, the observed process has a periodic or regular statistical behavior. Such a periodic statistical behavior 
was observed by us in the multimodal data collected around the Tunnel To Towers 5K run in NYC \cite{bane-fusion-2018}, \cite{bane-globalsip-2018}. 
In these papers, our objective was to detect the 5K run using multimodal data from CCTV cameras, Twitter, and Instagram posts. 
The details on the data collected can be found in these papers. In Fig.~\ref{fig:personInsta}, we have plotted the average counts of the number of persons, 
the number of Instagram posts, and the number of vehicles captured through and around two CCTV cameras in NYC. 
One camera was off the path of the 5K run and another camera was on the path of the run. The data corresponding 
to the off-path camera represents normal behavior. The object counts from CCTV images were extracted using a convolution neural network-based object detector. 
The data for persons and Instagram are from the off-path camera (the Instagram data is collected in a square grid around a camera), and 
the vehicle data is from the on-path camera. As can be seen from the figure, the average counts across different data collection days (here four Sundays) show a similar pattern (growth and decay).  Such periodic or cyclostationary behavior of the data can also be observed in neural spike data \cite{zhang-demba-2018}, \cite{banerjee2018sequential}.
In a controlled experiment, where an animal is trained to do a certain task repeatedly, one can expect a similarity in neural firing patterns \cite{banerjee2018sequential}. 
\begin{figure}
\centering
\includegraphics[scale=0.28]{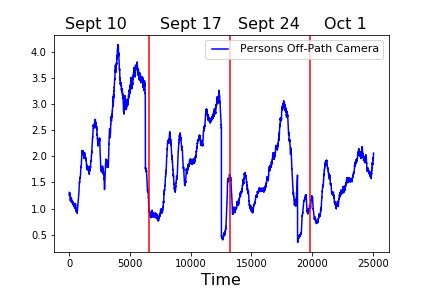}
\hspace{-0.5cm}
\includegraphics[scale=0.28]{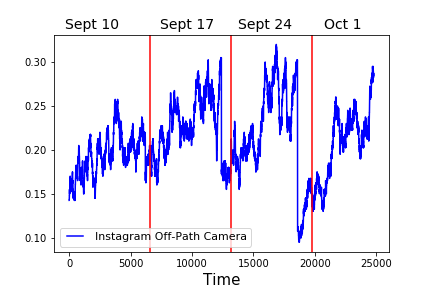}
\includegraphics[scale=0.28]{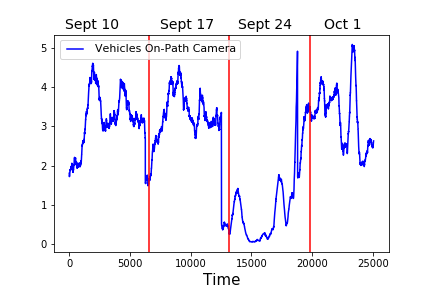}
\caption{The average person, vehicle, and Instagram post counts for data collected in NYC in \cite{bane-fusion-2018}. The figure shows that the average counts 
have similar statistical properties across different days. The vehicle data is from a CCTV camera on the path of the event and captures a decrease in the average counts
on the event day, Sept. 24.}
\label{fig:personInsta}\vspace{-0.4cm}
\end{figure}

The anomaly detection problem in such applications can be posed as the problem of detecting deviations from this regular or periodic statistical behavior. 
In this paper, we develop theory and algorithms to solve this change detection problem. Precise problem formulations are given below. 
The quickest change detection literature is divided broadly into two parts: results for i.i.d. processes with algorithms that can be computed recursively and enjoy strongly optimality properties \cite{mous-astat-1986}, and results for non-i.i.d. data with algorithms that are hard to compute but are asymptotically optimal \cite{lai-ieeetit-1998}, \cite{tart-veer-siamtpa-2005}, \cite{tartakovsky2017asymptotic}, \cite{Pergamenchtchikov2018}. We show in this paper that the algorithms for our non-i.i.d. setup can be computed recursively and are asymptotically 
optimal. A class of models of this type was first studied by us in \cite{bane-globalsip-2018}. In this paper, we study a much broader class of processes and also develop optimality theory.

\vspace{-0.3cm}

\section{Model to Capture Periodic Statistical Behavior}
\vspace{-0.2cm}

An independent and identically distributed (i.i.d.) process is a sequence of random variables that 
are independent and have the same distribution. We define a new category of stochastic processes called 
independent and periodically identically distributed (i.p.i.d.) processes:

\begin{definition}
Let $\{X_n\}$ be a sequence of random variables such that the variable $X_n$ has density $f_n$. The stochastic process $\{X_n\}$
is called independent and periodically identically distributed (i.p.i.d) if $X_n$ are independent and there is a positive integer $T$ such 
that the sequence of densities $\{f_n\}$ is periodic with period $T$:
$$
f_{n+T} = f_n, \quad \forall n \geq 1. 
$$
We say that the process is i.p.i.d. with the law $(f_1, \cdots, f_T)$. 
\end{definition}

Note that the law of an i.p.i.d. process is completely characterized by the finite-dimensional product distribution involving $(f_1, \cdots, f_T)$. 
We assume that in a normal regime, the data can be modeled as an i.p.i.d. process. At some point in time, due to an anomaly, 
the distribution of the i.p.i.d. process deviates from $(f_1, \cdots, f_T)$. Our objective in this paper is to develop algorithms
that can observe the process $\{X_n\}$ in real time and detect changes in the distribution as quickly as possible, subject to a constraint 
on the rate of false alarms. In Section~\ref{sec:Generalipid}, we define the change point model and develop algorithms and optimality theory for detecting changes in 
an i.p.i.d. processes. In Section~\ref{sec:UnknownPost}, we extend the results to the case when the post-change distribution is unknown. In Section~\ref{sec:Paramipid}, we comment on parametric i.p.i.d. models that are easier to learn as compared to learning $(f_1, \cdots, f_T)$.

\section{A Change Detection Theory for General i.p.i.d. Processes}\label{sec:Generalipid}
Consider another periodic sequence of densities $\{g_n\}$ such that 
$$
g_{n+T} = g_{n}, \quad \forall n \geq 1. 
$$
Thus, we essentially have $T$ distinct set of densities $(g_1, \cdots, g_T)$. We assume that at some point in time $\nu$, 
called the change point in the following, the law of the i.p.i.d. process is governed not by the densities $(f_1, \cdots, f_T)$, 
but by the new set of densities $(g_1, \cdots, g_T)$ (a precise definition is given below). 
These densities need not be all different from the set of densities $(f_1, \cdots, f_T)$, 
but we assume that there exists at least an $i$ such that they are different:
\begin{equation}\label{eq:diffpdfassum}
g_i \neq f_i, \quad \text{for some } i = 1, 2, \cdots, T. 
\end{equation}
The change point model is as follows. At a time point $\nu$, the distribution of the random variable changes from $\{f_n\}$ to $\{g_n\}$:
\begin{equation}\label{eq:changepointmodel}
X_n \sim 
\begin{cases}
f_n, &\quad \forall n < \nu, \\
g_n &\quad \forall n \geq \nu.
\end{cases}
\end{equation}
We emphasize that the densities $\{f_n\}$ and $\{g_n\}$ are periodic. This model is equivalent to saying that we have two i.p.i.d. processes, 
one governed by the densities $(f_1, \cdots, f_T)$ and another governed by the densities $(g_1, \cdots, g_T)$, and at the change point $\nu$, 
the process switches from one i.p.i.d. process to another. A more general change point model where the exact post-change density is unknown will be discussed in Section~\ref{sec:UnknownPost}. 

We want to detect the change described in \eqref{eq:changepointmodel} as quickly as possible, subject to a constraint on the rate of false alarms. 
We are looking for a stopping time $\tau$ for the process to minimize a metric on the delay $\tau - \nu$ and to avoid the event of false alarm $\{\tau < \nu\}$. Specifically, 
we are interested in the popular false alarm and delay metrics of Pollak \cite{poll-astat-1985} and Lorden~\cite{lord-amstat-1971}. Let $\Prob_\nu$ denote the
probability law of the process $\{X_n\}$ when the change occurs at time $\nu$ and let $\Expect_\nu$ denote the corresponding expectation. When there is no change, 
we use the notation $\Expect_\infty$. The quickest change detection problem formulation of Pollak \cite{poll-astat-1985} is defined as 
\begin{equation}\label{eq:Pollak}
\begin{split}
\min_\tau &\;\;\; \sup_{\nu\geq 1} \; \Expect_\nu [\tau - \nu | \tau \geq \nu] \\
\text{subj. to}& \;\;\;\; \Expect_\infty[\tau] \geq \beta,
\end{split}
\end{equation}
where $\beta$ is a given constraint on the mean time to false alarm. Thus, the objective is to find a stopping time $\tau$ 
that minimizes the worst case conditional average detection delay subject to a constraint on the mean time to false alarm. 
A popular alternative is the worst-worst case delay metric of Lorden \cite{lord-amstat-1971}: 
\begin{equation}\label{eq:Lorden}
\begin{split}
\min_\tau &\;\;\;\; \sup_{\nu\geq 1} \; \text{ess} \sup \Expect_\nu [\tau - \nu | X_1, \cdots, X_{\nu-1}]\\
\text{subj. to}& \;\;\; \;\Expect_\infty[\tau] \geq \beta,
\end{split}
\end{equation}
where $\text{ess} \sup$ is used to denote the supremum of the random variable $\Expect_\nu [\tau - \nu | X_1, \cdots, X_{\nu-1}]$ outside a set of measure zero. 
Further motivation and comparison of these and other problem formulations for change point detection can be found in the literature \cite{veer-bane-elsevierbook-2013}, \cite{tart-niki-bass-2014}, \cite{poor-hadj-qcd-book-2009}, \cite{lai-ieeetit-1998}.

We now propose a CUSUM-type scheme to detect the above change (also see \cite{lai-ieeetit-1998}). We compute the sequence of statistics 
\begin{equation}\label{eq:PeriodicCUSUM}
W_{n+1} = \max_{1 \leq k \leq n+1} \sum_{i=k}^{n+1} \log \frac{g_{i}(X_{i})}{f_{i}(X_{i})}
\end{equation}
and raise an alarm as soon as the statistic is above a threshold $A$:
\begin{equation}\label{eq:PeriodicCUSUMstop}
\tau_c = \inf \{n \geq 1: W_n > A\}.
\end{equation}
We show below that this scheme is asymptotically optimal in a well-defined sense. But, before that we prove an important property 
that the statistic $W_n$ can be computed recursively and using finite memory. The proof of this and all the other results are provided in Section~~\ref{sec:proofs}. 
\begin{lemma}\label{thm:recurs}
The statistic sequence $\{W_n\}$ can be recursively computed as 
\begin{equation}\label{eq:PeriodicCUSUMrecur}
W_{n+1} = W_n^{+} + \log \frac{g_{n+1}(X_{n+1})}{f_{n+1}(X_{n+1})},
\end{equation}
where $(x)^+ = \max\{x, 0\}$. 
Further, since the set of pre- and post-change densities $(f_1, \cdots, f_T)$ and $(g_1, \cdots, g_T)$ are finite, 
the recursion \eqref{eq:PeriodicCUSUMrecur} can be computed using finite memory needed to store these $2T$ densities. 
\end{lemma}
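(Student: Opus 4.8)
The plan is to introduce the per-sample log-likelihood ratio $Z_i \defeq \log \frac{g_i(X_i)}{f_i(X_i)}$, so that the definition \eqref{eq:PeriodicCUSUM} reads $W_{n+1} = \max_{1\le k \le n+1}\sum_{i=k}^{n+1} Z_i$, and then to establish \eqref{eq:PeriodicCUSUMrecur} by a direct manipulation of this maximum, followed by a short periodicity argument for the memory claim.

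First I would isolate the last index in the outer maximum. Separating the term $k = n+1$, whose inner sum is just $Z_{n+1}$, from the terms with $1 \le k \le n$ gives
\[
W_{n+1} = \max\left\{ Z_{n+1}, \ \max_{1\le k \le n}\Big(\textstyle\sum_{i=k}^{n} Z_i + Z_{n+1}\Big)\right\}.
\]
Since $Z_{n+1}$ does not depend on $k$, it can be pulled out of the inner maximum, and recognizing $\max_{1\le k\le n}\sum_{i=k}^n Z_i = W_n$ yields $W_{n+1} = \max\{Z_{n+1},\, W_n + Z_{n+1}\}$. Factoring $Z_{n+1}$ out of the remaining maximum then gives $W_{n+1} = Z_{n+1} + \max\{0, W_n\} = W_n^{+} + Z_{n+1}$, which is exactly \eqref{eq:PeriodicCUSUMrecur}. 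The base case is handled by setting $W_0 = 0$: the recursion then produces $W_1 = W_0^{+} + Z_1 = Z_1$, matching \eqref{eq:PeriodicCUSUM} evaluated at $n+1 = 1$.

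For the finite-memory claim I would invoke periodicity directly. Because $f_{n+T} = f_n$ and $g_{n+T} = g_n$, the two densities entering the increment $Z_{n+1}$ at any time are always among the $2T$ functions $(f_1,\dots,f_T)$ and $(g_1,\dots,g_T)$, with the correct pair selected by the residue $(n \bmod T) + 1$. Hence each application of \eqref{eq:PeriodicCUSUMrecur} requires only the stored bank of $2T$ densities, the scalar $W_n$, and a counter tracking time modulo $T$; no growing window of past observations is needed.

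I do not expect a genuine obstacle here. The only steps requiring care are the index bookkeeping in the case split, in particular cleanly isolating the $k = n+1$ term from the rest, and the choice of the initial value $W_0 = 0$ that makes the recursion agree with the definition at $n = 0$. The periodicity argument underlying the finite-memory statement is immediate once the recursion is in hand.
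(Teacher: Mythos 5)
Your proof is correct and follows essentially the same route as the paper: isolating the $k=n+1$ term in the maximum, pulling out $Z_{n+1}$, and identifying $\max_{1\le k\le n}\sum_{i=k}^{n} Z_i = W_n$ to obtain $W_{n+1}=W_n^{+}+Z_{n+1}$, with the same periodicity argument for the finite-memory claim. Your explicit treatment of the base case $W_0=0$ is a minor addition the paper leaves implicit, but it does not change the argument.
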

In the rest of the paper, we refer to \eqref{eq:PeriodicCUSUMrecur} to as the Periodic-CUSUM algorithm. 

Towards proving the optimality of the Periodic-CUSUM scheme, we obtain a universal lower bound on the performance of any stopping time 
for detecting changes in i.p.i.d. processes. Define 
\begin{equation}\label{eq:KLnumber}
I = \frac{1}{T}\sum_{i=1}^T D(g_i \; \| \; f_i),
\end{equation}
where $D(g_i \; \| \; f_i)$ is the Kullback-Leibler divergence between the densities $g_i$ and $f_i$. We assume 
that
$$
D(g_i \; \| \; f_i) < \infty, \quad \forall i=1, 2, \cdots, T,
$$
and
$$
0 < D(g_i \; \| \; f_i), \quad \text{ for some } i=1, 2, \cdots, T.
$$

\begin{theorem}\label{thm:LB}
Let the information number $I$ as defined in \eqref{eq:KLnumber} satisfy $0 < I < \infty$. Then, for any stopping time $\tau$ satisfying the false alarm constraint $\Expect_\infty[\tau] \geq \beta$, we have as $\beta \to \infty$
\begin{equation}\label{eq:LB}
\begin{split}
\sup_{\nu\geq 1} \; \text{ess} &\sup \Expect_{\nu} [\tau - \nu | X_1, \cdots, X_{\nu-1}] \\
& \geq  \sup_{\nu\geq 1} \; \Expect_{\nu} [\tau - \nu | \tau \geq \nu]  \; \geq \; \frac{\log \beta}{I} (1 + o(1)),
\end{split}
\end{equation}
where an $o(1)$ term is one that goes to zero in the limit as $\beta \to \infty$. 
\end{theorem}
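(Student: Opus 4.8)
The plan is to split the claim into its two inequalities and treat them separately. The first inequality, that Lorden's essential-supremum delay dominates Pollak's conditional-average delay, is immediate. Since $\tau$ is a stopping time, $\{\tau \geq \nu\} \in \sigma(X_1,\dots,X_{\nu-1})$, so for each fixed $\nu$ the tower property gives
\[
\Expect_\nu[\tau-\nu\mid\tau\geq\nu]
= \Expect_\nu\!\big[\,\Expect_\nu[\tau-\nu\mid X_1,\dots,X_{\nu-1}]\;\big|\;\tau\geq\nu\,\big]
\leq \esssup \;\Expect_\nu[\tau-\nu\mid X_1,\dots,X_{\nu-1}],
\]
because a conditional average is bounded above by the essential supremum. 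Taking $\sup_\nu$ on both sides yields the first inequality, so the substance lies in the second.

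For the second inequality I would reduce to the general asymptotic lower bound for non-i.i.d. change detection from \cite{lai-ieeetit-1998}. Writing $Z_i = \log\frac{g_i(X_i)}{f_i(X_i)}$, that framework yields $\sup_\nu \Expect_\nu[\tau-\nu\mid\tau\geq\nu] \geq \frac{\log\beta}{I}(1+o(1))$ for every $\tau$ with $\Expect_\infty[\tau]\geq\beta$, provided one verifies the regularity condition that, for each $\epsilon>0$,
\[
\lim_{m\to\infty}\;\sup_{\nu\geq 1}\;\esssup\;\Prob_\nu\!\left(\max_{\nu\leq t\leq \nu+m}\sum_{i=\nu}^{t} Z_i \geq (1+\epsilon)\,I\,m \;\Big|\; X_1,\dots,X_{\nu-1}\right)=0 .
\]
All remaining work is the verification of this condition for i.p.i.d. processes.

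To verify it I would exploit the periodic structure. Under $\Prob_\nu$ the increments $\{Z_i\}_{i\geq\nu}$ are independent of $X_1,\dots,X_{\nu-1}$, so the essential supremum over the conditioning variables is vacuous and each conditional probability equals the corresponding unconditional one. I then group the post-change increments into consecutive blocks of length $T$: because the densities are $T$-periodic, the block sums $\sum_{j=1}^{T} Z_{\nu+kT+j-1}$, $k=0,1,\dots$, are i.i.d.\ under $\Prob_\nu$ with common mean $\sum_{i=1}^{T} D(g_i\|f_i) = T I$, and this common distribution does not depend on $\nu$ apart from a harmless initial partial period. The strong law of large numbers for the i.i.d.\ block sums then gives $\frac{1}{n}\sum_{i=\nu}^{\nu+n-1} Z_i \to I$ almost surely under $\Prob_\nu$, uniformly in $\nu$; since $I>0$ the running maximum over $t\in[\nu,\nu+m]$ is attained near the endpoint, whence $\frac{1}{m}\max_{\nu\leq t\leq\nu+m}\sum_{i=\nu}^{t} Z_i \to I$ as well. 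Convergence in probability of this normalized maximum to $I<(1+\epsilon)I$ is exactly the displayed condition.

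The step I expect to be the main obstacle is making the convergence uniform in $\nu$ and controlling the running maximum rather than the terminal partial sum. Independence from the past removes the essential supremum, and periodicity makes the block-sum law independent of $\nu$, so the $\sup_\nu$ collapses to a single statement; the maximal passage is handled by observing that a positive-drift random walk with a fixed block law has its running maximum governed by the same limit $I$. With these points in place the condition holds and \cite{lai-ieeetit-1998} delivers the bound $\frac{\log\beta}{I}(1+o(1))$, completing the proof.
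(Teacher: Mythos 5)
Your proposal is correct and takes essentially the same route as the paper: both reduce the substantive (second) inequality to Theorem 1 of the Lai (1998) reference and verify its sufficient condition by exploiting the i.p.i.d.\ structure --- independence of the post-change observations from the past removes the essential supremum, periodicity collapses $\sup_{\nu\geq 1}$ to finitely many cases, and a strong-law argument controls the running maximum of the partial sums of $Z_i = \log\frac{g_i(X_i)}{f_i(X_i)}$. Your explicit i.i.d.\ block-sum formulation of the SLLN and the tower-property proof of the first inequality are only minor presentational refinements of what the paper's proof does.
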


We now show that the Periodic-CUSUM scheme \eqref{eq:PeriodicCUSUM}--\eqref{eq:PeriodicCUSUMrecur} is asymptotically optimal for 
both the formulations \eqref{eq:Pollak} and \eqref{eq:Lorden}. 
\begin{theorem}\label{thm:UB}
Let the information number $I$ as defined in \eqref{eq:KLnumber} satisfy $0 < I < \infty$. Then, the  Periodic-CUSUM stopping time $\tau_c$ \eqref{eq:PeriodicCUSUM}--\eqref{eq:PeriodicCUSUMrecur} with $A=\log \beta$ satisfies the false alarm constraint
$$\Expect_\infty[\tau_c] \geq \beta,
$$ 
and as $\beta \to \infty$,
\begin{equation}\label{eq:UB}
\begin{split}
\sup_{\nu\geq 1} \; & \Expect_\nu [\tau_c - \nu | \tau_c \geq \nu]  \\
&\leq \sup_{\nu\geq 1} \; \text{ess} \sup \Expect_\nu [\tau_c - \nu | X_1, \cdots, X_{\nu-1}]  \\
&\leq  \;\; \frac{A}{I} (1 + o(1)) = \;\; \frac{\log \beta}{I} (1 + o(1)).
\end{split}
\end{equation}
\end{theorem}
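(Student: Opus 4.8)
The statement bundles three claims: the false-alarm guarantee $\Expect_\infty[\tau_c]\ge\beta$, the inner inequality comparing Lorden's and Pollak's delays, and the outer upper bound $A/I\,(1+o(1))$. The middle inequality is immediate and I would dispose of it first: since $\tau_c$ is a stopping time, $\{\tau_c\ge\nu\}\in\mathcal F_{\nu-1}:=\sigma(X_1,\dots,X_{\nu-1})$, so the Pollak delay is an average over the histories in that event of the conditional delay $\Expect_\nu[\tau_c-\nu\mid X_1,\dots,X_{\nu-1}]$, and any such average is dominated by the essential supremum. Thus the real work is the false-alarm bound and the outer delay bound.

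For the false-alarm bound I would exploit the recursion \eqref{eq:PeriodicCUSUMrecur}. Writing $R_n=e^{W_n}$ and $Z_{n+1}=\log\frac{g_{n+1}(X_{n+1})}{f_{n+1}(X_{n+1})}$, the recursion reads $R_{n+1}=\max\{R_n,1\}\,e^{Z_{n+1}}$. Under $\Prob_\infty$ one has $\Expect_\infty[e^{Z_{n+1}}\mid\mathcal F_n]=\int g_{n+1}=1$, so $\Expect_\infty[R_{n+1}\mid\mathcal F_n]=\max\{R_n,1\}\le R_n+1$; hence $\{R_n-n\}$ is a $\Prob_\infty$-supermartingale started at $R_0-0=1$. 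Applying optional stopping at $\tau_c\wedge m$, using $R_{\tau_c}>e^A$ on $\{\tau_c\le m\}$, and letting $m\to\infty$ yields $\Expect_\infty[\tau_c]\ge e^A$ (up to an additive constant irrelevant for the asymptotics). Choosing $A=\log\beta$ then gives the constraint.

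The outer delay bound is the main obstacle, precisely because after the change the drift of the increments is time-varying: $\Expect_\nu[Z_i]=D(g_i\|f_i)$ depends on the phase $i\bmod T$. I would first dominate $\tau_c$ by a one-sided test. Since $W_n=\max_{1\le k\le n}\sum_{i=k}^n Z_i\ge\sum_{i=\nu}^n Z_i$ for $n\ge\nu$, the statistic $W$ crosses $A$ no later than the partial sum $\sum_{i=\nu}^n Z_i$ does, so $\tau_c\le\tilde\tau:=\inf\{n\ge\nu:\sum_{i=\nu}^n Z_i>A\}$. Crucially the increments $\{Z_i\}_{i\ge\nu}$ are independent of $\mathcal F_{\nu-1}$ under $\Prob_\nu$, so $\Expect_\nu[(\tilde\tau-\nu)^+\mid X_1,\dots,X_{\nu-1}]$ is non-random and depends on $\nu$ only through its phase; this removes the essential supremum at once.

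To estimate $\Expect_\nu[\tilde\tau-\nu]$ I would restore the classical i.i.d.\ renewal picture by grouping the post-change increments into blocks of one full period. The block sums $B_\ell=\sum_{i=\nu+(\ell-1)T}^{\nu+\ell T-1}Z_i$ are i.i.d., since each block contains exactly one increment from every phase class, with common mean $\Expect_\nu[B_\ell]=\sum_{i=1}^T D(g_i\|f_i)=TI>0$ regardless of the phase of $\nu$. If $\tilde N=\inf\{L:\sum_{\ell=1}^L B_\ell>A\}$ is the number of blocks needed to cross $A$, then $\tilde\tau-\nu\le T\tilde N$, and standard renewal theory (Wald's identity with a bounded-overshoot estimate, valid since $0<TI<\infty$) gives $\Expect_\nu[\tilde N]=\frac{A}{TI}(1+o(1))$ as $A\to\infty$. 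Hence $\Expect_\nu[\tilde\tau-\nu]\le T\,\Expect_\nu[\tilde N]=\frac{A}{I}(1+o(1))$ uniformly over the phase, which bounds the essential supremum and, by the first paragraph, the Pollak delay too. The delicate point is the $(1+o(1))$ constant: it hinges on the overshoot $\sum_{\ell=1}^{\tilde N}B_\ell-A$ being $o(A)$, for which I would invoke the finiteness of the Kullback--Leibler numbers and nonlinear renewal theory.
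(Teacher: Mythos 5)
Your proof is correct, but it takes a genuinely different route from the paper's. The paper proves Theorem~\ref{thm:UB} by verifying the sufficient condition of Lai's general theory for non-i.i.d.\ data: it establishes the uniform lower-tail condition $\lim_{n\to\infty}\sup_{k \geq \nu \geq 1}\esssup\,\Prob_\nu\bigl(\tfrac{1}{n}\sum_{i=k}^{k+n}Z_i \leq I-\delta \,\big|\, X_1,\dots,X_{\nu-1}\bigr)=0$, which by periodicity collapses to a maximum over finitely many phase pairs $(\nu,k)$ with $1\leq\nu\leq T$, $\nu\leq k\leq\nu+T$, each term vanishing by the law of large numbers for i.p.i.d.\ sums; the delay bound is then quoted from Theorem 4 of \cite{lai-ieeetit-1998}, and the false alarm bound from the same reference via the $\Prob_\infty$-martingale property of the likelihood ratios. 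You instead give a self-contained argument: optional stopping of the supermartingale $e^{W_n}-n$ for the false alarm constraint, and, for the delay, domination of $\tau_c$ by the one-sided test started at $\nu$, whose increments you group into full periods so that the block sums $B_\ell$ are i.i.d.\ with mean $TI>0$, after which renewal theory for first-passage times gives $\tfrac{A}{I}(1+o(1))$; independence of the post-change increments from $\sigma(X_1,\dots,X_{\nu-1})$ removes the essential supremum, and uniformity in $\nu$ is automatic because a block sum over one full period has the same distribution for every phase. The paper's route buys brevity and reuse (the same LLN verification serves both Theorem~\ref{thm:LB} and Theorem~\ref{thm:UB}); yours buys independence from Lai's framework and makes the mechanism explicit --- periodic blocking restores the classical i.i.d.\ renewal picture. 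Three small repairs to yours: (i) as written, optional stopping from $R_0=1$ yields only $\Expect_\infty[\tau_c]\geq e^A-1$, whereas the theorem asserts $\geq\beta$ exactly; start the supermartingale at $n=1$ (where $\Expect_\infty[e^{W_1}]\leq 1$) to get $\Expect_\infty[\tau_c]\geq e^A\,\Prob_\infty(\tau_c<\infty)$, and note the claim is trivial when $\Prob_\infty(\tau_c<\infty)<1$; (ii) Wald's identity and the renewal theorem need $\Expect_\nu|B_\ell|<\infty$, which you should record --- it is automatic since $\Expect_{g_i}[(\log (f_i/g_i))^+]\leq\Expect_{g_i}[f_i/g_i]\leq 1$, so finiteness of the Kullback--Leibler numbers indeed suffices; (iii) nonlinear renewal theory is not needed here: the block sums form an exact (unperturbed) random walk, so the elementary renewal theorem for first-passage times already gives the expected overshoot $=o(A)$ under finite mean.
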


We note that the algorithm is also optimal for various other formulations studied in the literature \cite{lai-ieeetit-1998}. We do not report these here due to a paucity of space. 

\section{Change Detection With Unknown Post-Change i.p.i.d. Process}\label{sec:UnknownPost}
In the previous section, we assumed that the post-change law $(g_1, \cdots, g_T)$ is known to the decision maker. This information 
was used to design the Periodic-CUSUM algorithm \eqref{eq:PeriodicCUSUMrecur}. In practice, this information may not be available. We 
now show that if the post-change law belongs to a finite set of $M$ possible distributions,
$(g_1^{(1)}, \cdots, g_T^{(1)}), \cdots, (g_1^{(M)}, \cdots, g_T^{(M)})$, then an asymptotically optimal test can be designed. 

For $\ell \in \{1, \cdots, M\}$, define the statistic
\begin{equation}\label{eq:PeriodicCUSUMrecurell}
W_{n+1}^{(\ell)} = \left(W_n^{(\ell)}\right)^{+} + \log \frac{g_{n+1}^{(\ell)}(X_{n+1})}{f_{n+1}(X_{n+1})},
\end{equation}
and the stopping rule
\begin{equation}
\tau_{c\ell} = \inf \left\{n \geq 1: W_{n}^{(\ell)}  \geq \log (\beta M)\right\},
\end{equation}
which is the Periodic-CUSUM stopping rule for the $\ell$th post-change law $(g_1^{(\ell)}, \cdots, g_T^{(\ell)})$.
Now, define
\begin{equation}
\tau_{cm} = \inf \left\{n \geq 1: \max_{1 \leq \ell \leq M} W_{n}^{(\ell)}  \geq \log (\beta M)\right\}.
\end{equation}
Then, note that
\begin{equation}\label{eq:UBmax}
\tau_{cm} \leq \tau_{c\ell}, \quad \forall \; \ell =1, \cdots, M.
\end{equation}
The stopping rule $\tau_{cm}$ is the stopping rule under which we stop the first time any of the $\ell$ Periodic-CUSUMs raise an alarm.

We now show that this stopping rule is optimal for both Lorden's and Pollak's criteria. Towards this end, we define a
Shiryaev-Roberts-type statistic
\begin{equation}
R_n = \sum_{\ell=1}^M \sum_{k=1}^n \prod_{i=k}^n \frac{g_{i}^{(\ell)}(X_{i})}{f_{i}(X_{i})}
\end{equation}
and a Shiryaev-Roberts-type stopping rule
\begin{equation}
\tau_{sr} = \inf \left\{n \geq 1: R_n \geq \beta M\right\}.
\end{equation}
Note that 
\begin{equation}\label{eq:CUSUMleqSR}
\tau_{sr} \leq \tau_{cm}.
\end{equation}
We have the following theorem.
\begin{theorem}\label{thm:unknownpost}
The process $\{R_n - nM\}$ is a $\Prob_\infty$ martingale. If 
$
\Expect_\infty[R_{\tau_{sr}}] < \infty,
$
then
$$
\Expect_\infty[\tau_{cm}] \geq \Expect_\infty[\tau_{sr}] \geq \beta. 
$$
Further, if $(g_1^{(\ell)}, \cdots, g_T^{(\ell)})$ is the true post-change i.p.i.d. law and 
\begin{equation}\label{eq:KLnumberell}
I_\ell = \frac{1}{T}\sum_{i=1}^T D(g_i^{(\ell)} \; \| \; f_i),
\end{equation}
then
\begin{equation}\label{eq:UBMax}
\begin{split}
\sup_{\nu\geq 1} \; & \Expect_\nu [\tau_{cm} - \nu | \tau_{cm} \geq \nu]  \\
&\leq \sup_{\nu\geq 1} \; \text{ess} \sup \Expect_\nu [\tau_{cm} - \nu | X_1, \cdots, X_{\nu-1}]  \\
&\leq  \;\; \frac{\log \beta}{I_\ell} (1 + o(1)).
\end{split}
\end{equation}
Given the lower bound in Theorem~\ref{thm:LB}, the stopping rule $\tau_{cm}$ is thus asymptotically optimal with respect to the criteria of Lorden and Pollak, 
uniformly over each possible post-change hypothesis $(g_1^{(\ell)}, \cdots, g_T^{(\ell)})$, $\ell = 1, \cdots, M$.
\end{theorem}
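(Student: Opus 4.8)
The plan is to establish the statement in four pieces: the martingale property, the false-alarm bound, the delay upper bound, and the optimality conclusion.

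First, for the martingale claim, I would introduce the per-hypothesis likelihood ratios $L_i^{(\ell)} = g_i^{(\ell)}(X_i)/f_i(X_i)$ and the per-hypothesis statistics $R_n^{(\ell)} = \sum_{k=1}^n \prod_{i=k}^n L_i^{(\ell)}$, so that $R_n = \sum_{\ell=1}^M R_n^{(\ell)}$ with $R_0 = 0$. Splitting off the $k=n+1$ term gives the classical Shiryaev--Roberts recursion $R_{n+1}^{(\ell)} = L_{n+1}^{(\ell)}\bigl(1 + R_n^{(\ell)}\bigr)$. Under $\Prob_\infty$ the observations are independent with $X_{n+1} \sim f_{n+1}$, so, writing $\mathcal{F}_n = \sigma(X_1, \ldots, X_n)$, we have $\Expect_\infty[L_{n+1}^{(\ell)} \mid \mathcal{F}_n] = \int g_{n+1}^{(\ell)}(x)\,dx = 1$. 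Taking conditional expectations in the recursion yields $\Expect_\infty[R_{n+1}^{(\ell)} \mid \mathcal{F}_n] = 1 + R_n^{(\ell)}$, and summing over $\ell$ gives $\Expect_\infty[R_{n+1} \mid \mathcal{F}_n] = R_n + M$. Hence $\Expect_\infty[R_{n+1} - (n+1)M \mid \mathcal{F}_n] = R_n - nM$, which is the martingale property.

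Second, for the false-alarm bound, I would apply optional sampling to the martingale $\{R_n - nM\}$ at $\tau_{sr}$. Starting from the stopped-martingale identity $\Expect_\infty[R_{n\wedge\tau_{sr}}] = M\,\Expect_\infty[n\wedge\tau_{sr}]$, I would let $n\to\infty$: the right side converges to $M\,\Expect_\infty[\tau_{sr}]$ by monotone convergence, and the hypothesis $\Expect_\infty[R_{\tau_{sr}}] < \infty$ supplies exactly the integrability needed to pass to the limit on the left, giving $\Expect_\infty[R_{\tau_{sr}}] = M\,\Expect_\infty[\tau_{sr}]$. Since $\tau_{sr}$ stops the first time $R_n \geq \beta M$, we have $R_{\tau_{sr}} \geq \beta M$, whence $M\,\Expect_\infty[\tau_{sr}] \geq \beta M$ and therefore $\Expect_\infty[\tau_{sr}] \geq \beta$. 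The remaining inequality $\Expect_\infty[\tau_{cm}] \geq \Expect_\infty[\tau_{sr}]$ is immediate from the pathwise ordering $\tau_{sr} \leq \tau_{cm}$ in \eqref{eq:CUSUMleqSR}.

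Third, for the delay upper bound, I would exploit the other pathwise ordering \eqref{eq:UBmax}, $\tau_{cm} \leq \tau_{c\ell}$ for every $\ell$. If $(g_1^{(\ell)}, \ldots, g_T^{(\ell)})$ is the true post-change law, then $\tau_{cm} - \nu \leq \tau_{c\ell} - \nu$ pointwise, so both Pollak's and Lorden's delay functionals for $\tau_{cm}$ are dominated by those for $\tau_{c\ell}$. But $\tau_{c\ell}$ is exactly the Periodic-CUSUM rule for the now correctly specified $\ell$-th law with threshold $A = \log(\beta M)$, so Theorem~\ref{thm:UB} applies verbatim and bounds its worst-case delay by $\frac{\log(\beta M)}{I_\ell}(1+o(1))$. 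Since $\log(\beta M) = \log\beta + \log M = \log\beta\,(1+o(1))$ as $\beta\to\infty$ with $M$ fixed, this collapses to $\frac{\log\beta}{I_\ell}(1+o(1))$, giving \eqref{eq:UBMax}.

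For the optimality conclusion, I would combine this with Theorem~\ref{thm:LB}: for each fixed true hypothesis $\ell$ the lower bound applies with information number $I = I_\ell$, so every stopping time meeting $\Expect_\infty[\tau] \geq \beta$ incurs delay at least $\frac{\log\beta}{I_\ell}(1+o(1))$; since $\tau_{cm}$ meets the constraint and attains this rate, it is asymptotically optimal uniformly over $\ell$. The only genuinely delicate step is the martingale verification together with the integrability needed to push optional sampling through at $\tau_{sr}$; everything else is assembly of the two pathwise sandwiches $\tau_{sr} \leq \tau_{cm} \leq \tau_{c\ell}$ with the already-proven Theorems~\ref{thm:LB} and \ref{thm:UB}, plus the harmless absorption of $\log M$ into the $o(1)$ term.
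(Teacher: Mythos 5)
Your proposal is correct and follows essentially the same route as the paper: direct verification of the martingale property, optional sampling at $\tau_{sr}$ combined with the ordering $\tau_{sr} \leq \tau_{cm}$ for the false-alarm bound, and the ordering $\tau_{cm} \leq \tau_{c\ell}$ together with Theorem~\ref{thm:UB} (absorbing $\log M$ into the $o(1)$ term) for the delay bound and optimality. You in fact supply more detail than the paper at the two points it leaves terse --- the explicit recursion $R_{n+1}^{(\ell)} = L_{n+1}^{(\ell)}(1+R_n^{(\ell)})$ behind the martingale claim, and the domination argument (via $R_{n\wedge\tau_{sr}} \leq \beta M + R_{\tau_{sr}}$) justifying the passage to the limit in optional sampling.
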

The condition $\Expect_\infty[R_{\tau_{sr}}] < \infty$ is equivalent to saying that the mean overshoot is finite. This assumption is satisfied for example 
if the likelihood ratios are bounded or may be satisfied if the LLRs have finite variance. The latter will be verified in a future version of this paper. 
Note that our statistics are not random walks (but periodic versions of them). As a result, we cannot directly borrow such finiteness results 
from \cite{wood-nonlin-ren-th-book-1982}, for example.

\section{Detection in Parametric i.p.i.d. Models}\label{sec:Paramipid}

In practice, learning pre- and post-change laws $(f_1, \cdots, f_T)$ and $(g_1, \cdots, g_T)$ can be hard. Thus, it is of interest to study low-dimensional parametric i.p.i.d. models. 
Such parametric models were the object of our study in \cite{bane-globalsip-2018} where we assumed that we have a periodic function of parameters $\{\theta_k\}$ with period $T$,
and $X_n \sim f(\cdot, \theta_n)$. Another option is to assume that we have a smooth function 
$
\theta(t), t \in [0,1],
$
and 
\begin{equation}\label{eq:paraipid}
X_n \sim f(\cdot, \theta(n/T)) \quad n = \{1, \cdots, T\}.
\end{equation}
The batch parameter model studied in \cite{bane-globalsip-2018} is then equivalent to a step approximation to $\theta(t)$ in \eqref{eq:paraipid}. 
The change detection problem in this process will be equivalent to detecting a change in the parametric function from $\theta(t)$ to some $\lambda(t)$.
The Periodic-CUSUM algorithm can be easily applied to such models, and all the optimality results proved here are valid for the parametric models as well. We refer the readers to \cite{bane-globalsip-2018} for numerical results and application of our algorithms to NYC data. We do not reproduce them here due to a paucity of space. 

\vspace{-0.3cm}
\section{Conclusions and Future Work}
\vspace{-0.2cm}
We developed a general asymptotic theory for quickest detection of changes in i.p.i.d. models. We also studied the case where the post-change i.p.i.d. law is unknown. 
In future, we will apply the developed algorithm to real multi-modal data, e.g., as collected in \cite{bane-fusion-2018} and \cite{bane-globalsip-2018}. We will also study 
optimality theory for more general change point models in the i.p.i.d. setting. 

%
%


\section{Proofs}\label{sec:proofs}

\begin{proof}[Proof of Lemma~\ref{thm:recurs}] For any sequence $\{Z_i\}$ of random variables, we can write
\begin{equation}
\begin{split}
\max_{1 \leq k \leq n+1} & \sum_{i=k}^{n+1} Z_i  = \max\left\{ \max_{1 \leq k \leq n} \sum_{i=k}^{n+1} Z_i, \; Z_{n+1}\right\}\\
=&\max\left\{ \max_{1 \leq k \leq n} \left(\sum_{i=k}^{n} Z_i + Z_{n+1}\right), \; Z_{n+1}\right\}\\
=&\max\left\{ \max_{1 \leq k \leq n} \left(\sum_{i=k}^{n} Z_i \right) + Z_{n+1}, \; Z_{n+1}\right\}\\
=&\max\left\{ \max_{1 \leq k \leq n} \left(\sum_{i=k}^{n} Z_i \right) , \; 0\right\} + Z_{n+1}.
\end{split}
\end{equation}
Substituting $Z_i = \log \frac{g_{i}(X_{i})}{f_{i}(X_{i})}$ into the above equation we get the desired recursion for $W_n$ in \eqref{eq:PeriodicCUSUM}: 
$$
W_{n+1} = W_n^{+} + \log \frac{g_{n+1}(X_{n+1})}{f_{n+1}(X_{n+1})}.
$$
Note that the increment term $\log \frac{g_{n+1}(X_{n+1})}{f_{n+1}(X_{n+1})}$ is only a function of the current observation $X_{n+1}$. Also, 
since the processes are i.p.i.d. with laws  $(f_1, \cdots, f_T)$ and $(g_1, \cdots, g_T)$, the likelihood ratio functions $ \frac{g_{n}(\cdot)}{f_{n}(\cdot)}$ 
are not all distinct, and there are only $T$ such functions $ \frac{g_{1}(\cdot)}{f_{1}(\cdot)}$ to $ \frac{g_{T}(\cdot)}{f_{T}(\cdot)}$. Thus, 
we need only a finite amount of memory to store the past statistic, current observation, and $2T$ densities to compute this statistic recursively. 
\end{proof}

\begin{proof}[Proof of Theorem~\ref{thm:LB}]
Let $Z_i = \log \frac{g_i(X_i)}{f_i(X_i)}$ be the log likelihood ratio at time $i$. We show that the sequence $\{Z_i\}$ satisfies the following statement:
\begin{equation}\label{eq:thm1_1}
\begin{split}
\lim_{n \to \infty} \sup_{\nu \geq 1} \esssup \; \Prob_\nu & \left(  \max_{t \leq n} \sum_{i=\nu}^{\nu+t} Z_i \geq I(1+\delta)n \big| X_1, \cdots, X_{\nu-1}\right) \\
& = 0, \quad \forall \delta > 0,
\end{split}
\end{equation}
where $I$ is as defined in \eqref{eq:KLnumber}. The lower bound then follows from Theorem 1 in \cite{lai-ieeetit-1998}. 
Towards proving \eqref{eq:thm1_1}, note that as $n \to \infty$
\begin{equation}\label{eq:thm1_2}
\begin{split}
\frac{1}{n}\sum_{i=\nu}^{\nu+n} Z_i \to \Expect_1\left[ \sum_{i=1}^T \log \frac{g_i(X_i)}{f_i(X_i)}\right] = I, \quad \text{a. s.} \; \Prob_\nu, \; \; \forall \nu \geq 1. 
\end{split}
\end{equation}
The above display is true because of the i.p.i.d. nature of the observation processes. This implies that as $n \to \infty$
\begin{equation}\label{eq:thm1_3}
\begin{split}
 \max_{t \leq n} \frac{1}{n}\sum_{i=\nu}^{\nu+t} Z_i \to I, \quad \text{a. s.} \; \Prob_\nu, \; \; \forall \nu \geq 1.
\end{split}
\end{equation}
To show this, note that
\begin{equation}\label{eq:thm1_4}
\begin{split}
 \max_{t \leq n} \frac{1}{n}\sum_{i=\nu}^{\nu+t} Z_i  = \max \left\{ \max_{t \leq n-1} \frac{1}{n}\sum_{i=\nu}^{\nu+t} Z_i, \; \;  \frac{1}{n}\sum_{i=\nu}^{\nu+n} Z_i\right\}.
\end{split}
\end{equation}
For a fixed $\epsilon > 0$, because of \eqref{eq:thm1_2}, the LHS in \eqref{eq:thm1_3} is greater than $I(1-\epsilon)$ for $n$ large enough. Also, let the maximum on the LHS be achieved at a point $k_n$, 
then 
$$
 \max_{t \leq n} \frac{1}{n}\sum_{i=\nu}^{\nu+t} Z_i = \frac{1}{n}\sum_{i=\nu}^{\nu+k_n} Z_i  = \frac{k_n}{n} \frac{1}{k_n}\sum_{i=\nu}^{\nu+k_n} Z_i.
$$  
Now $k_n$ cannot be bounded because of the presence of $n$ in the denominator. This implies $k_n > i$, for any fixed $i$, and $k_n \to \infty$. Thus, $\frac{1}{k_n}\sum_{i=\nu}^{\nu+k_n} Z_i \to I$. Since $k_n/n \leq 1$, we have that the LHS in \eqref{eq:thm1_3} is less than $I(1+\epsilon)$, for $n$ large enough. This proves 
\eqref{eq:thm1_3}. 
To prove \eqref{eq:thm1_1}, note that due to the i.p.i.d. nature of the process
\begin{equation}\label{eq:thm1_5}
\begin{split}
\sup_{\nu \geq 1} \esssup \; \Prob_\nu & \left(  \max_{t \leq n} \sum_{i=\nu}^{\nu+t} Z_i \geq I(1+\delta)n \big| X_1, \cdots, X_{\nu-1}\right) \\
=\sup_{1 \leq \nu \leq T} \; \Prob_\nu & \left(  \frac{1}{n}\max_{t \leq n} \sum_{i=\nu}^{\nu+t} Z_i \geq I(1+\delta) \right) 
\end{split}
\end{equation}
The right hand side goes to zero because of \eqref{eq:thm1_3} and because the maximum on the right hand side in \eqref{eq:thm1_5} is over only finitely many terms. 
\end{proof}

\begin{proof}[Proof of Theorem~\ref{thm:UB}]
Again with $Z_i = \log \frac{g_i(X_i)}{f_i(X_i)}$, we show that the sequence $\{Z_i\}$ satisfies the following statement:
\begin{equation}\label{eq:thm2_1}
\begin{split}
\lim_{n \to \infty} \sup_{k \geq \nu \geq 1} \esssup \; \Prob_\nu & \left(  \frac{1}{n} \sum_{i=k}^{k+n} Z_i \leq I - \delta \big| X_1, \cdots, X_{\nu-1}\right) \\
& = 0, \quad \forall \delta > 0.
\end{split}
\end{equation}
The upper bound then follows from Theorem 4 in \cite{lai-ieeetit-1998}. 
To prove \eqref{eq:thm2_1}, note that due to the i.p.i.d nature of the process we have 
\begin{equation}\label{eq:thm2_2}
\begin{split}
&\sup_{k \geq \nu \geq 1} \esssup \; \Prob_\nu  \left(  \frac{1}{n} \sum_{i=k}^{k+n} Z_i \leq I - \delta \big| X_1, \cdots, X_{\nu-1}\right) \\
& = \sup_{\nu + T \geq k \geq \nu \geq 1} \Prob_\nu  \left(  \frac{1}{n} \sum_{i=k}^{k+n} Z_i \leq I - \delta \right) \\
& = \max_{1 \leq \nu \leq T} \max_{\nu \leq k \leq \nu+T} \Prob_\nu  \left(  \frac{1}{n} \sum_{i=k}^{k+n} Z_i \leq I - \delta \right) \\
\end{split}
\end{equation}
The right hand side of the above equation goes to zero for any $\delta$ because of \eqref{eq:thm1_2} and also because of the finite number of maximizations. 
The false alarm result follows directly from \cite{lai-ieeetit-1998} with $A=\log \beta$ because the likelihood ratios here also form a $\Prob_\infty$ martingale.
\end{proof}

\begin{proof}[Proof of Theorem~\ref{thm:unknownpost}]
That $\{R_n - nM\}$ is a $\Prob_\infty$ martingale can be proved by direct verification. For the false alarm proof, we assume that 
$\Expect_\infty[\tau_{sr}] < \infty$, otherwise the proof is trivial. Since $\Expect_\infty[R_{\tau_{sr}}] < \infty$, we have that 
$R_{\tau_{sr}} - \tau_{sr} M$ is integrable. Further, as $n \to \infty$, 
\begin{equation}
\begin{split}
\int_{\tau_{sr} > n} & |R_{\tau_{sr}} -  \tau_{sr} M| \; dP_\infty \leq \int_{\tau_{sr} > n}  R_{\tau_{sr}} + \tau_{sr} M \; dP_\infty \\
&\leq \beta M \Prob_\infty(\tau_{sr} > n) + \int_{\tau_{sr} > n}  \tau_{sr} M \; dP_\infty \; \to \; 0.
\end{split}
\end{equation}
Thus, by the optional sampling theorem \cite{wood-nonlin-ren-th-book-1982} and \eqref{eq:CUSUMleqSR} we have
$$
\Expect_\infty[\tau_{cm}] \geq \Expect_\infty[\tau_{sr}]  = \frac{\Expect_\infty[R_{\tau_{sr}}]}{M} \geq \frac{M\beta}{M} = \beta.
$$
The delay result is true because of \eqref{eq:UBmax}. 
\end{proof}

\clearpage
\newpage
\bibliographystyle{ieeetr}



\bibliography{QCD_verSubmitted}

\begin{thebibliography}{10}

\bibitem{poor-hadj-qcd-book-2009}
H.~V. Poor and O.~Hadjiliadis, {\em Quickest detection}.
\newblock Cambridge University Press, 2009.

\bibitem{tart-niki-bass-2014}
A.~G. Tartakovsky, I.~V. Nikiforov, and M.~Basseville, {\em Sequential
  Analysis: {Hypothesis} Testing and Change-Point Detection}.
\newblock Statistics, CRC Press, 2014.

\bibitem{veer-bane-elsevierbook-2013}
V.~V. Veeravalli and T.~Banerjee, {\em Quickest Change Detection}.
\newblock Academic Press Library in Signal Processing: Volume 3 -- Array and
  Statistical Signal Processing, 2014.
\newblock \url{http://arxiv.org/abs/1210.5552}.

\bibitem{taga-jqt-1998}
G.~Tagaras, ``A survey of recent developments in the design of adaptive control
  charts,'' {\em Journal of Quality Technology}, vol.~30, pp.~212--231, July
  1998.

\bibitem{bane-tsp-2015}
T.~Banerjee and V.~V. Veeravalli, ``Data-efficient quickest change detection in
  sensor networks,'' {\em IEEE Transactions on Signal Processing}, vol.~63,
  no.~14, pp.~3727--3735, 2015.

\bibitem{chen-bane-tps-2016}
Y.~C. Chen, T.~Banerjee, A.~D. Domínguez-García, and V.~V. Veeravalli,
  ``Quickest line outage detection and identification,'' {\em IEEE Transactions
  on Power Systems}, vol.~31, pp.~749--758, Jan 2016.

\bibitem{banerjee2018sequential}
T.~Banerjee, S.~Allsop, K.~M. Tye, D.~Ba, and V.~Tarokh, ``Sequential detection
  of regime changes in neural data,'' {\em arXiv preprint arXiv:1809.00358},
  2018.

\bibitem{bane-fusion-2018}
T.~Banerjee, G.~Whipps, P.~Gurram, and V.~Tarokh, ``Sequential event detection
  using multimodal data in nonstationary environments,'' in {\em Proc. of the
  21st International Conference on Information Fusion}, July 2018.

\bibitem{bane-globalsip-2018}
T.~Banerjee, G.~Whipps, P.~Gurram, and V.~Tarokh, ``Cyclostationary statistical
  models and algorithms for anomaly detection using multi-modal data,'' in {\em
  Proc. of the 6th IEEE Global Conference on Signal and Information
  Processing}, Nov. 2018.

\bibitem{zhang-demba-2018}
Y.~Zhang, N.~Malem-Shinitski, S.~A. Allsop, K.~Tye, and D.~Ba, ``Estimating a
  separably-markov random field (smurf) from binary observations,'' {\em Neural
  Computation}, vol.~30, no.~4, pp.~1046--1079, 2018.

\bibitem{mous-astat-1986}
G.~V. Moustakides, ``Optimal stopping times for detecting changes in
  distributions,'' {\em Ann. Statist.}, vol.~14, pp.~1379--1387, Dec. 1986.

\bibitem{lai-ieeetit-1998}
T.~L. Lai, ``Information bounds and quick detection of parameter changes in
  stochastic systems,'' {\em IEEE Trans. Inf. Theory}, vol.~44, pp.~2917
  --2929, Nov. 1998.

\bibitem{tart-veer-siamtpa-2005}
A.~G. Tartakovsky and V.~V. Veeravalli, ``General asymptotic {Bayesian} theory
  of quickest change detection,'' {\em SIAM Theory of Prob. and App.}, vol.~49,
  pp.~458--497, Sept. 2005.

\bibitem{tartakovsky2017asymptotic}
A.~G. Tartakovsky, ``On asymptotic optimality in sequential changepoint
  detection: Non-iid case,'' {\em IEEE Transactions on Information Theory},
  vol.~63, no.~6, pp.~3433--3450, 2017.

\bibitem{Pergamenchtchikov2018}
S.~Pergamenchtchikov and A.~G. Tartakovsky, ``Asymptotically optimal pointwise
  and minimax quickest change-point detection for dependent data,'' {\em
  Statistical Inference for Stochastic Processes}, vol.~21, pp.~217--259, Apr
  2018.

\bibitem{poll-astat-1985}
M.~Pollak, ``Optimal detection of a change in distribution,'' {\em Ann.
  Statist.}, vol.~13, pp.~206--227, Mar. 1985.

\bibitem{lord-amstat-1971}
G.~Lorden, ``Procedures for reacting to a change in distribution,'' {\em Ann.
  Math. Statist.}, vol.~42, pp.~1897--1908, Dec. 1971.

\bibitem{wood-nonlin-ren-th-book-1982}
M.~Woodroofe, {\em Nonlinear Renewal Theory in Sequential Analysis}.
\newblock CBMS-NSF regional conference series in applied mathematics, SIAM,
  1982.

\end{thebibliography}

\end{document}